\theoremstyle{plain}
\newtheorem{theorem}{Theorem}[section]
\newtheorem{proposition}[theorem]{Proposition}
\newtheorem{corollary}[theorem]{Corollary}
\theoremstyle{definition}
\newtheorem{remark}[theorem]{Remark}
\numberwithin{equation}{section}
\DeclareMathOperator{\Sym}{\mathfrak{S}}
\DeclareMathOperator{\PF}{PF}
\DeclareMathOperator{\PPF}{PPF}
\newcommand{\ER}{\mathbb{E}}
\newcommand{\PR}{\mathbb{P}}
\newenvironment{customthm}[1]
  {\innercustomthm}
  {\endinnercustomthm}
\title{Fixed Points and Cycles of Parking Functions}
\author[Rubey]{Martin Rubey}
\address[Rubey]{Fakult\"{a}t f\"{u}r Mathematik und Geoinformation, TU Wien, Vienna, Austria}
\email{\textcolor{blue}{\textcolor{blue}{\href{mailto:martin.rubey@tuwien.ac.at}{martin.rubey@tuwien.ac.at}}}}
\author[Yin]{Mei Yin}
\thanks{M.~Yin was supported by the University of Denver's Professional Research Opportunities for Faculty Fund 80369-145601 and Simons Foundation Grant MPS-TSM-00007227.}
\address[Yin]{Department of Mathematics, University of Denver, Denver, CO, USA}
\email{\textcolor{blue}{\textcolor{blue}{\href{mailto:mei.yin@du.edu}{mei.yin@du.edu}}}}
\begin{document}

\keywords{Parking functions; Pollak's circle argument; Fixed points and cycles} 

\subjclass[2020]{% Trying my best
05A15; % Exact enumeration problems, generating functions
05A19, % Combinatorial identities, bijective combinatorics
60C05} % Combinatorial probability

\begin{abstract}
A \emph{parking function} of length $n$ is a sequence $\pi=(\pi_1,\dots, \pi_n)$ 
of positive integers such that if $\lambda_1\leq\cdots\leq \lambda_n$ is the increasing
rearrangement of $\pi_1,\dots,\pi_n$, then $\lambda_i\leq i$ for $1\leq i\leq n$. The index $i$ is a fixed point of the parking function $\pi$ if $\pi_i=i$. More generally, for $m\geq 1$, the indices $(i_1, \dots, i_m)$ where the $i_j$'s are all distinct constitute an $m$-cycle of the parking function $\pi$ if $\pi_{i_1}=i_2, \pi_{i_2}=i_3, \dots, \pi_{i_{m-1}}=i_m, \pi_{i_m}=i_1$.
In this paper we obtain some exact results on the number of fixed points and cycles of parking functions. Our derivations are based on generalizations of Pollak's argument and the symmetry of parking coordinates. Extensions of our techniques are discussed.
\end{abstract}

\maketitle

\section{Introduction}
\label{sec:intro}
Parking functions were introduced by Konheim and Weiss \cite{k-w} in the study of the linear probes
of random hashing functions. In the classical parking
function scenario, we have $n$
parking spaces on a one-way street, labelled $1,2,\dots,n$ in
consecutive order as we drive down the street. There are $n$ cars
$C_1,\dots,C_n$. Each car $C_i$ has a preferred space $1\leq \pi_i\leq
n$. The cars drive down the street one at a time in order
$C_1,\dots, C_n$. The car $C_i$ drives immediately to space $\pi_i$ and
then parks in the first available space. Thus if $\pi_i$ is empty, then
$C_i$ parks there; otherwise $C_i$ next goes to space $\pi_i+1$, $\pi_i+2$, etc., until it finds an available space to park in (if no such space exists, then $C_i$ leaves the street unparked). If
all cars are able to park, then the sequence $\pi=(\pi_1,\dots, \pi_n)$ is
called a \emph{parking function} of length $n$. It is well-known
and easy to see that if $\lambda_1\leq\cdots\leq \lambda_n$ is the (weakly) increasing
rearrangement of $\pi_1,\dots,\pi_n$, then $\pi$ is a parking function if
and only if $\lambda_i\leq i$ for $1\leq i\leq n$. Equivalently, $\pi$ is a parking function if and only if
\begin{equation*}
\#\{k: \pi_k \leq i\} \geq i, \hspace{.2cm} \forall i=1, \dots, n.
\end{equation*}
This implies that parking functions are invariant under the action of the symmetric group $\Sym_n$ permuting the $n$ cars, that is, permuting the list of preferences $\pi$. Write $\PF_n$
for the set of parking functions of length $n$.

The first significant result on parking functions, due to Pyke
\cite{pyke} in another context and then to Konheim and Weiss
\cite{k-w}, is that the number of parking functions of length $n$ is
equal to $(n+1)^{n-1}$. A famous combinatorial proof was given by
Pollak (unpublished but recounted in \cite{Pollak} and
\cite{Pollak2}). It boils down to the following easily verified
statement: Let $G$ denote the group of all $n$-tuples
$(a_1,\dots,a_n)\in [n+1]^n$ with componentwise addition modulo
$n+1$ (where $[n]$ is the standard shorthand for $\{1, \dots, n\}$). Let $H$ be the subgroup generated by $(1,1,\dots,1)$. Then
every coset of $H$ contains exactly one parking function.

Nowadays the study of parking functions has found many applications in combinatorics, probability,
computer science, and beyond. Some recent work may be found in \cite{Adeniran, Colmenarejo, Ji, Kang, Yan2}. We refer to Yan \cite{Yan} for a comprehensive survey and to Carlson et al. \cite{carlson2020parking} for the many generalizations of the classical parking function scenario. In this paper we explore the cycle structure of parking functions. Unlike the cycle structure of permutations which has been well studied, not much is known about the cycle structure of parking functions, especially in terms of exact enumerative results. A traditional direction is to standardize a parking function to a permutation and then take the usual cycle structure. This was discussed in detail in Stanley \cite{Stanley} supplementary problems for Chapter 7, utilizing the powerful parking function symmetric function machinery. Our investigation makes an effort in another direction. See Diaconis and Hicks \cite{DH} for some open problems and Paguyo \cite{Paguyo} for some partial answers in this direction. For a parking function $\pi=(\pi_1,\dots, \pi_n)$, we say $i$ is a fixed point of $\pi$ if $\pi_i=i$. More generally, for $m\geq 1$, we say $(i_1, \dots, i_m)$ where the $i_j$'s are all distinct is an $m$-cycle of $\pi$ if $\pi_{i_1}=i_2, \pi_{i_2}=i_3, \dots, \pi_{i_{m-1}}=i_m, \pi_{i_m}=i_1$. Fixed points are simply $1$-cycles and cyclic points are points that lie on some $m$-cycle of $\pi$ where $m \geq 1$.

We are interested in deriving some exact formulas for the number of fixed points and cycles of parking functions. Our counting formulas are based on generalizations of Pollak's argument and the symmetry of parking coordinates. In Section \ref{sec:classical} we study classical parking functions, in Section \ref{sec:prime} we study prime parking functions, and in Section \ref{sec:dis} we discuss extensions of our techniques to $(r, k)$-parking functions (see Section \ref{sec:gen}) and present connections of our current investigation to other research areas through OEIS entries (see Section \ref{sec:connect}). While some OEIS entries are easy to explain, many entries remain mysterious, waiting to be explored deeper in the future.

Theorems \ref{classical-fixed} and \ref{classical-gen} come as a particular surprise:

\begin{customthm}{2.1}\label{classical-fixed}
Let $0\leq k\leq n$. The number of parking functions of length $n$ with $k$ fixed points is given by
\begin{equation*}
\frac{1}{(n+1)^2} \binom{n+1}{k} \left(n^{n-k+1}-(-1)^{n-k+1}\right).
\end{equation*}
\end{customthm}

\begin{customthm}{2.2}\label{classical-gen}
The generating function for parking functions of length $n$ with respect to the number of fixed points is given by
\begin{equation*}
\frac{1}{(n+1)^2} \left((q+n)^{n+1}-(q-1)^{n+1}\right).
\end{equation*}
\end{customthm}

In contrast, there is not such a nice and clean formula for permutations, which despite their simpler structure are characterized by the intricate generating function
\begin{equation*}
\sum_{n=0}^\infty \sum_{k=0}^n \frac{F(k, n)z^n q^k}{n!}=\frac{\exp((q-1)z)}{1-z},
\end{equation*}
where $F(k, n)$ is the number of permutations of $[n]$ with $k$ fixed points. 

Theorems \ref{prime-fixed} and \ref{prime-gen} are the corresponding prime versions for parking functions:

\begin{customthm}{3.1}\label{prime-fixed}
Let $0\leq k\leq n-1$. The number of prime parking functions of length $n$ with $k$ fixed points is given by
\begin{equation*}
\binom{n-1}{k}(n-2)^{n-k-1}.
\end{equation*}
\end{customthm}

\begin{customthm}{3.2}\label{prime-gen}
The generating function for prime parking functions of length $n$ with respect to the number of fixed points is given by $(q+n-2)^{n-1}$.
\end{customthm}

Extending the notion of fixed points to cycles, we obtain more general results for cycles in classical parking functions in Theorem \ref{classical-cycle} and prime parking functions in Theorem \ref{prime-cycle}. Our exact formulas for fixed points and cycles furthermore provide insight into the asymptotic features of the cycle structure of random parking functions, particularly in connection with random functions and random permutations whose cycle structures have already been quite well understood.

\begin{itemize}

\item A random parking function is a random function in $\mathcal{F}_n=\{f: [n] \rightarrow [n]\}$ conditioned on being in $\PF_n$.

\item At the other end of the spectrum, we have random permutations, which are special cases of random parking functions where all cars have different preferred spaces.
\end{itemize}

As stated by Diaconis and Hicks \cite{DH}, in analogy with \emph{equivalence of ensembles} in statistical mechanics, it is natural to expect that for some features, the distribution of the features in the ``micro-canonical ensemble'' ($\PF_n$) should be close to the features in the ``canonical ensemble'' ($\mathcal{F}_n$). We will show in Remark \ref{classical-remark} that the expected number of $m$-cycles in a random parking function is asymptotically $1/m$ for any $m$. This coincides with the corresponding asymptotic result for random functions as well as random permutations. Nevertheless error terms from $1/m$ are not the same or uniform for the three different random objects.

\begin{itemize}

\item For random permutations in the symmetric group $\Sym_n$, the asymptotic formula $1/m$ for the expected number of $m$-cycles is also exact. Based on this fact, Shepp and Lloyd \cite{SL} employed a generating function approach to show that the total number of cycles in a uniformly random permutation from $\Sym_n$ is asymptotically normal with mean and variance $\log n$.

\item For random functions in $\mathcal{F}_n$, the exact formula for the expected number of $m$-cycles is $(n)_m / (m n^m)$, where $(n)_m$ is a falling factorial. Building upon this, Flajolet and Odlyzko \cite{FO} applied generating function techniques to show that the total number of cycles in a uniformly random function from $\mathcal{F}_n$ is asymptotically normal with mean and variance $\frac{1}{2} \log n$. Note that asymptotically, random functions have about half as many total cycles as random permutations.

\item Now comes our contribution. In Theorem \ref{classical-average}, we present the exact formula for the expected number of $m$-cycles in random parking functions in $\PF_n$, and it should be tractable to use this result to check against the equivalence of ensembles heuristic. Theorem \ref{prime-average} provides the corresponding prime version.

\end{itemize}

\section{Classical parking functions}
\label{sec:classical}
Our first result identifies the number of parking functions with a prescribed number of fixed points.
\begin{theorem}\label{classical-fixed}
Let $0\leq k\leq n$. The number of parking functions of length $n$ with $k$ fixed points is given by
\begin{equation*}
\frac{1}{(n+1)^2} \binom{n+1}{k} \left(n^{n-k+1}-(-1)^{n-k+1}\right).
\end{equation*}
\end{theorem}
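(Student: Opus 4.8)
The plan is to count parking functions with exactly $k$ fixed points by adapting Pollak's circle argument to track fixed points. Recall that in Pollak's setup, the group $G = [n+1]^n$ (with componentwise addition mod $n+1$) is partitioned into cosets of $H = \langle(1,1,\dots,1)\rangle$, and each coset of size $n+1$ contains exactly one parking function. I would first extend this correspondence to keep account of which positions are fixed, i.e.\ satisfy $\pi_i = i$. The natural strategy is inclusion–exclusion: for a subset $S \subseteq [n]$, let $f(S)$ denote the number of parking functions of length $n$ for which $\pi_i = i$ for all $i \in S$ (with no constraint on the remaining positions), so that the number with fixed-point set exactly equal to some prescribed set is recovered by Möbius inversion, and by symmetry the count with exactly $k$ fixed points is $\sum_{j \geq k} (-1)^{j-k}\binom{j}{k}\binom{n}{j} f_j$, where $f_j = f(S)$ for any $|S| = j$.

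The key computation is therefore to evaluate $f_j$, the number of parking functions with a prescribed set of $j$ positions forced to be fixed. Here I would invoke Pollak's argument in a relative form. Fixing the values at the $j$ chosen coordinates and requiring the whole sequence to be a parking function is a constrained counting problem; one expects $f_j$ to count sequences $(\pi_1,\dots,\pi_n)$ with $j$ specified entries pinned and the multiset of all entries satisfying the parking condition $\lambda_i \leq i$. The cleanest route is to recognize that after pinning the fixed coordinates, the remaining structure should again be enumerable by a circle/coset argument, yielding a clean closed form for $f_j$. I anticipate that $f_j$ will have a tidy expression (something like a binomial times a power of $n$), so that the alternating sum in the inclusion–exclusion telescopes into the stated formula. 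The appearance of $n^{n-k+1}$ and the parity term $(-1)^{n+k}$, together with the prefactor $\frac{1}{(n+1)^2}\binom{n+1}{k}$, strongly suggests that $f_j$ itself already has a $\frac{1}{n+1}$-type normalization coming directly from the coset structure.

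The main obstacle, I expect, will be obtaining the exact formula for $f_j$ rather than just an asymptotic or recursive handle: pinning $\pi_i = i$ interacts nontrivially with the global parking condition, since a forced value of $i$ in position $i$ still competes for space $i$ with the other cars, so the constraint is not simply that of a parking function on a smaller ground set. I would resolve this by setting up a modified circular argument in which the $j$ pinned coordinates are treated as fixed markers on the circle of $n+1$ spots and counting the admissible configurations of the remaining $n-j$ free cars, then checking that each equivalence class under the cyclic shift contains exactly one valid parking function. Once $f_j$ is pinned down, the remaining work is the binomial-coefficient bookkeeping in the inclusion–exclusion, which should reduce — after reindexing and using a Vandermonde-type identity — directly to $\frac{1}{(n+1)^2}\binom{n+1}{k}\left(n^{n-k+1}+(-1)^{n+k}\right)$. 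A useful sanity check along the way is to verify the total $\sum_{k=0}^n$ of these counts equals $(n+1)^{n-1}$ and that the $k=n$ case gives the single all-fixed parking function.
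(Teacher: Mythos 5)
Your overall strategy---inclusion--exclusion over positions forced to be fixed, combined with a Pollak-style circle argument---is the same as the paper's, but there is a genuine error in the setup. You assert that $f(S)$, the number of parking functions with $\pi_i=i$ for all $i\in S$, depends only on $|S|$, writing the answer as $\sum_{j\ge k}(-1)^{j-k}\binom{j}{k}\binom{n}{j}f_j$. This is false: already for $n=2$ one has $f(\{1\})=|\{(1,1),(1,2)\}|=2$ while $f(\{2\})=|\{(1,2)\}|=1$. Pinning $\pi_i=i$ is a constraint whose severity depends on the \emph{value} $i$, not just on how many coordinates are pinned, so the factor $\binom{n}{j}f_j$ is not legitimate. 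Relatedly, your proposed ``relative'' circle argument for a fixed set $S$ of markers would not go through: once specific values are pinned, the cyclic rotation by $(1,1,\dots,1)$ no longer preserves the constraint set, so cosets of $H$ do not interact cleanly with a fixed $S$.

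The fix, which is what the paper actually does, is to keep the inclusion--exclusion in the form $\sum_{j\ge k}(-1)^{j-k}\binom{j}{k}\sum_{|S|=j}f(S)$ (valid without any constancy assumption) and to apply the circle argument to the \emph{aggregate} $\sum_{|S|=j}f(S)$. By permuting coordinates, this aggregate equals $\lvert\{\pi\in\PF_n:\pi_1<\cdots<\pi_j\}\rvert$, and here the rotation acts on the chosen spots along with everything else: one selects $j$ of the $n+1$ circular spots for the first $j$ cars and chooses the remaining $n-j$ preferences freely, giving $\binom{n+1}{j}(n+1)^{n-j-1}$. Your proposal also stops short of the key computation---you only ``anticipate'' a tidy closed form for $f_j$ and that the alternating sum telescopes---so even setting aside the symmetry error, the two essential steps (the exact count and the evaluation of $\sum_{\ell}(-1)^{\ell}\binom{n+1}{k,\ell,n-k-\ell+1}(n+1)^{n-k-\ell-1}$ via the binomial theorem) remain to be supplied.
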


\begin{proof}
For $1\leq i\leq n$, let $A_i=\vert\{\pi \in \PF_n: \pi_i=i\} \vert$. Recall that the number of parking functions of length $n$ is $\vert \PF_n \vert=(n+1)^{n-1}$. For temporarily fixed $1\leq i_1<\cdots<i_k\leq n$, by De Morgan's law and the inclusion-exclusion principle,
\begin{align*}
&\left \vert \bigcap_{i \neq i_j \, \forall 1\leq j\leq k} A_i^c \right \vert = \left \vert \PF_n - \bigcup_{i \neq i_j \, \forall 1\leq j\leq k} A_i \right \vert \\
=& \vert \PF_n \vert - \sum_{\ell=1}^{n-k} (-1)^{\ell+1}  \sum_{\substack{1\leq s_1<\cdots<s_\ell\leq n \\ s_t \neq i_j \, \forall 1\leq t\leq \ell\, \forall 1\leq j\leq k}} \left \vert \bigcap_{t=1}^{\ell} A_{s_t} \right \vert \\
=&\sum_{\ell=0}^{n-k} (-1)^{\ell}  \sum_{\substack{1\leq s_1<\cdots<s_\ell\leq n \\ s_t \neq i_j \, \forall 1\leq t\leq \ell\, \forall 1\leq j\leq k}} \left \vert \bigcap_{t=1}^{\ell} A_{s_t} \right \vert.
\end{align*}
The number of parking functions of length $n$ with $k$ fixed points is thus
\begin{align*}
&\hspace{2cm}\sum_{1\leq i_1<\cdots<i_k\leq n}\left\vert \left(\bigcap_{j=1}^{k} A_{i_j}\right) \bigcap \left(\bigcap_{i \neq i_j \, \forall 1\leq j\leq k} A_i^c\right)\right\vert\\=&\sum_{1\leq i_1<\cdots<i_k\leq n} \sum_{\ell=0}^{n-k} (-1)^\ell  \sum_{\substack{1\leq s_1<\cdots<s_\ell\leq n \\ s_t \neq i_j \, \forall 1\leq t\leq \ell\, \forall 1\leq j\leq k}} \left\vert \left(\bigcap_{j=1}^{k} A_{i_j}\right) \bigcap \left(\bigcap_{t=1}^{\ell} A_{s_t}\right) \right\vert \\
=&\sum_{\ell=0}^{n-k} (-1)^\ell \sum_{1\leq i_1<\cdots<i_k\leq n} \sum_{\substack{1\leq s_1<\cdots<s_\ell\leq n \\ s_t \neq i_j \, \forall 1\leq t\leq \ell\, \forall 1\leq j\leq k}} \left\vert \left(\bigcap_{j=1}^{k} A_{i_j}\right) \bigcap \left(\bigcap_{t=1}^{\ell} A_{s_t}\right) \right\vert\\
%=&\sum_{\ell=0}^{n-k} (-1)^\ell \sum_{1\leq i_1<\cdots<i_k\leq n} \sum_{\substack{1\leq s_1<\cdots<s_\ell\leq n \\ s_t \neq i_j \, \forall 1\leq t\leq \ell\, \forall 1\leq j\leq k}} \left\vert \left\{\pi \in \PF_n: \substack{\pi_1<\cdots<\pi_{k+\ell} \\ \left\{\pi_1, \ldots, \pi_{k+\ell}\right\}=\left\{i_1, \ldots, i_k, s_1, \ldots, s_\ell \right\}} \right\} \right\vert \\
=&\sum_{\ell=0}^{n-k} (-1)^\ell \left\vert \left\{\pi \in \PF_n: \substack{\pi_1<\cdots<\pi_{k}, \pi_{k+1}<\cdots<\pi_{k+\ell} \\ \pi_1, \ldots, \pi_{k+\ell} \text{ are pairwise distinct}} \right\} \right\vert,
\end{align*}
where the last equality follows by symmetry of parking coordinates, since any permutation of a parking function is a parking function. %and by $$\left\{\pi_1, \ldots, \pi_{k+\ell}\right\}=\left\{i_1, \ldots, i_k, s_1, \ldots, s_\ell \right\},$$ we mean equality as a set.

For ease of notation, let $$A_{k+\ell}=\left\{\pi \in \PF_n: \substack{\pi_1<\cdots<\pi_{k}, \pi_{k+1}<\cdots<\pi_{k+\ell} \\ \pi_1, \ldots, \pi_{k+\ell} \text{ are pairwise distinct}} \right\}.$$
Further let $$A'_{k+\ell}=\left\{\pi \in \PF_n: \pi_1, \pi_2, \dots, \pi_{k+\ell} \text{ are pairwise distinct} \right\},$$ $$B'_{k+\ell}=\left\{f: [n] \rightarrow [n+1]: f(1), f(2), \dots, f(k+\ell) \text{ are pairwise distinct}\right\}.$$
It is clear that $\vert A'_{k+\ell}\vert=k! \ell! \vert A_{k+\ell}\vert$. To obtain a simplified expression for $A_{k+\ell}$, we use an extension of Pollak's circle argument. Add an additional parking spot $n+1$, and arrange the spots in a circle. Allow $n+1$ also as a preferred spot. We first select $k+\ell$ spots for the first $k+\ell$ cars, which can be done in $\binom{n+1}{k+\ell}$ ways. Then for the remaining $n-k-\ell$ cars, there are $(n+1)^{n-k-\ell}$ possible preference sequences. Hence
\begin{equation*}
\vert B'_{k+\ell}\vert=(k+\ell)! \binom{n+1}{k+\ell}(n+1)^{n-k-\ell}.
\end{equation*}
Out of the $n+1$ rotations for any preference sequence, only one rotation becomes a valid parking function. Standard circular symmetry argument yields
\begin{equation*}
\vert A'_{k+\ell}\vert=\frac{1}{n+1} \vert B'_{k+\ell}\vert=(k+\ell)! \binom{n+1}{k+\ell}(n+1)^{n-k-\ell-1},
\end{equation*}
and so \begin{equation*}
\vert A_{k+\ell} \vert = \binom{n+1}{k, \ell, n-k-\ell+1} (n+1)^{n-k-\ell-1}.
\end{equation*}

So what we need to compute is
\begin{align*}
&\sum_{\ell=0}^{n-k} (-1)^{\ell} \binom{n+1}{k, \ell, n-k-\ell+1} (n+1)^{n-k-\ell-1} \\
=&\binom{n+1}{k} \sum_{\ell=0}^{n-k} (-1)^\ell \binom{n-k+1}{\ell} (n+1)^{n-k-\ell-1} \\
=&\binom{n+1}{k}\frac{1}{(n+1)^2} \left(\sum_{\ell=0}^{n-k+1} (-1)^\ell \binom{n-k+1}{\ell} (n+1)^{n-k+1-\ell}-(-1)^{n-k+1} \right) \\
=&\binom{n+1}{k}\frac{((n+1)-1)^{n-k+1}-(-1)^{n-k+1}}{(n+1)^2}.
\end{align*}
\end{proof}

\begin{theorem}\label{classical-gen}
The generating function for parking functions of length $n$ with respect to the number of fixed points is given by
\begin{equation*}
\frac{1}{(n+1)^2} \left((q+n)^{n+1}-(q-1)^{n+1}\right).
\end{equation*}
\end{theorem}

\begin{proof}
Utilizing Theorem \ref{classical-fixed}, we perform generating function calculations.
\begin{align*}
&\sum_{k=0}^n q^k \frac{1}{(n+1)^2} \binom{n+1}{k} \left(n^{n-k+1}-(-1)^{n-k+1}\right) \notag \\
=&\frac{1}{(n+1)^2} \left(\sum_{k=0}^{n+1} \binom{n+1}{k} q^k n^{n-k+1}-\sum_{k=0}^{n+1} \binom{n+1}{k} q^k (-1)^{n-k+1}\right) \notag \\
=&\frac{1}{(n+1)^2} \left((q+n)^{n+1}-(q-1)^{n+1}\right).
\end{align*}
\end{proof}

Extending the notion of fixed points to cycles, we have a more general result.

\begin{theorem}\label{classical-cycle}
Let $m\geq 1$ and $k\geq 0$ with $km \leq n$. The number of parking functions of length $n$ with $k$ $m$-cycles is given by
\begin{equation*}
\sum_{\ell: (k+\ell)m \leq n} (-1)^{\ell} \frac{\left((m-1)!\right)^{k+\ell}}{k!\ell!} \binom{n+1}{\underbracket[0.5pt]{m, \cdots, m}_{k+\ell \hspace{.1cm} \text{$m$'s}}, n-(k+\ell)m+1} (n+1)^{n-(k+\ell)m-1}.
\end{equation*}
\end{theorem}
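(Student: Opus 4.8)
The plan is to generalize the inclusion–exclusion argument used for Theorem~\ref{classical-fixed}, replacing the event ``$i$ is a fixed point'' by the event ``$\pi$ contains a prescribed $m$-cycle.'' The key observation is that an $m$-cycle is determined by a set of $m$ distinct indices $\{i_1,\dots,i_m\}$ together with a cyclic ordering of them; since a cyclic ordering of $m$ elements can be chosen in $(m-1)!$ ways, each $m$-element support set carries $(m-1)!$ distinct potential $m$-cycles. For a fixed collection of $k$ disjoint $m$-cycles, the constraint $\pi_{i_1}=i_2,\dots,\pi_{i_m}=i_1$ pins down the values of the $km$ coordinates indexed by the cycle elements to be a permutation (in fact a product of cycles) of those same indices.

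First I would set up inclusion–exclusion over the event that $\pi$ contains a given set of $k$ disjoint $m$-cycles, with the exclusion sum running over $\ell$ additional disjoint $m$-cycles on indices disjoint from the first $k$ cycles and from each other. Following the same symmetry-of-coordinates reduction as in Theorem~\ref{classical-fixed}, the count for a fixed choice of $k+\ell$ disjoint $m$-cycles reduces to counting parking functions whose first $(k+\ell)m$ coordinates realize the prescribed cyclic structure on a prescribed support. The combinatorial prefactor now has three sources: the number of ways to partition the relevant indices into $k+\ell$ blocks of size $m$ (contributing a multinomial), the $\left((m-1)!\right)^{k+\ell}$ ways of imposing a cyclic order on each block, and the overcounting factor $\frac{1}{k!\,\ell!}$ that corrects for the fact that the $k$ distinguished cycles and the $\ell$ excluded cycles are each unordered among themselves.

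Next I would run Pollak's circle argument exactly as before: adjoin the spot $n+1$, arrange the $n+1$ spots on a circle, allow $n+1$ as a preference, and note that among the $n+1$ rotations of any preference sequence precisely one is a parking function. Selecting which spots play which role and distributing them into the $k+\ell$ cyclic blocks produces the multinomial $\binom{n+1}{m,\dots,m,\,n-(k+\ell)m+1}$ with $k+\ell$ copies of $m$, while the free preferences of the remaining $n-(k+\ell)m$ cars contribute $(n+1)^{n-(k+\ell)m}$, and the circular symmetry divides by $n+1$ to give the exponent $n-(k+\ell)m-1$. Collecting the prefactors and the inclusion–exclusion sign $(-1)^\ell$ yields precisely the claimed summand, with the sum taken over all $\ell$ for which $(k+\ell)m\leq n$.

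The step I expect to be the main obstacle is the careful bookkeeping of the cyclic structure: I must verify that the constraint ``$\pi$ restricted to the block $\{i_1,\dots,i_m\}$ is an $m$-cycle'' is compatible with the Pollak reduction, i.e.\ that fixing the preferences of the cycle cars to form a cyclic permutation of their indices still behaves correctly under rotation, and that the $(m-1)!$ cyclic orderings per block and the $\tfrac{1}{k!\,\ell!}$ symmetry correction are counted without double-counting or sign errors. In particular, one must confirm that distinct $m$-cycles on overlapping or identical supports are properly excluded so that inclusion–exclusion over \emph{cycles} (rather than over \emph{support sets}) is valid; this is where the factor $\left((m-1)!\right)^{k+\ell}/(k!\,\ell!)$ earns its place, and checking it against the $m=1$ case of Theorem~\ref{classical-fixed} provides a useful consistency check.
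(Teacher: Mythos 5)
Your proposal is correct and follows essentially the same route as the paper: inclusion--exclusion over collections of disjoint $m$-cycles combined with Pollak's circle argument on $n+1$ spots, selecting $k+\ell$ blocks of size $m$ with a factor of $(m-1)!$ per block for the cyclic ordering and $\tfrac{1}{k!\,\ell!}$ for the unordered collections. In fact your bookkeeping of the prefactor $\left((m-1)!\right)^{k+\ell}/(k!\,\ell!)$ and of the disjointness of distinct cycles is more explicit than the paper's own sketch, which simply defers to the proof of Theorem~\ref{classical-fixed}.
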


\begin{proof}
The proof follows similarly as in Theorem \ref{classical-fixed}. We will not include all the technical details, but instead walk through the key ideas.

For distinct $1\leq i_1, \dots, i_m\leq n$, let $$A_{(i_1, \dots, i_m)}=\vert\{\pi \in \PF_n: \pi_{i_1}=i_2, \pi_{i_2}=i_3, \dots, \pi_{i_{m-1}}=i_m, \pi_{i_m}=i_1\} \vert.$$
We still apply De Morgan's law and the inclusion-exclusion principle. In analogy with Theorem \ref{classical-fixed}, we need to compute
\begin{align*}
\sum_{\ell: (k+\ell)m \leq n} (-1)^{\ell} \left((m-1)!\right)^{k+\ell} \sum_{(i^j_1, \dots, i^j_m): 1\leq j\leq k} \sum_{(s^t_1, \dots, s^t_m): 1\leq t\leq \ell} \left \vert \left(\bigcap_{j=1}^k A_{(i^j_1, \dots, i^j_m)} \right) \bigcap \left(\bigcap_{t=1}^\ell A_{(s^t_1, \dots, s^t_m)} \right) \right \vert,
\end{align*}
where the middle sum is over all possible $k$ $m$-cycles and the last sum is over all possible $\ell$ $m$-cycles, and all the $k+\ell$ $m$-cycles are pairwise non-overlapping. To avoid overcounting, we arrange the $m$-cycles $(i^j_1, \dots, i^j_m)$ so that for each $1\leq j\leq k$, $i^j_1<\cdots<i^j_m$ and further $i^1_1<\cdots<i^k_1$, and similarly for each $1\leq t\leq \ell$, $s^t_1<\cdots<s^t_m$ and further $s^1_1<\cdots<s^\ell_1$. This hence introduces an additional scalar factor $(m-1)!$ for each such $m$-cycle, which accounts for the number of ways of arranging an $m$-cycle with distinct entries.

Next we use symmetry of parking coordinates and apply an extension of Pollak's circle argument. The major difference from Theorem \ref{classical-fixed} is that instead of selecting $k+\ell$ spots on a circle with $n+1$ spots, we select $k+\ell$ $m$-cycles on the circle. Again in analogy with Theorem \ref{classical-fixed},
\begin{align*}
&\sum_{(i^j_1, \dots, i^j_m): 1\leq j\leq k} \sum_{(s^t_1, \dots, s^t_m): 1\leq t\leq \ell} \left \vert \left(\bigcap_{j=1}^k A_{(i^j_1, \dots, i^j_m)} \right) \bigcap \left(\bigcap_{t=1}^\ell A_{(s^t_1, \dots, s^t_m)} \right) \right \vert \\
=&\left\vert\left\{\pi \in \PF_n: \substack{\pi^1_1<\cdots<\pi^k_1, \pi^{k+1}_1<\cdots<\pi^{k+\ell}_1 \\
(\pi^1_1, \dots, \pi^1_m), \dots, (\pi^{(k+\ell)}_1, \dots, \pi^{(k+\ell)}_m) \text{ are pairwise non-overlapping}} \right\} \right\vert \\
=&\frac{1}{k!\ell!}\left\vert\left\{\pi \in \PF_n: (\pi^1_1, \dots, \pi^1_m), \dots, (\pi^{(k+\ell)}_1, \dots, \pi^{(k+\ell)}_m) \text{ are pairwise non-overlapping} \right\} \right\vert \\
=&\frac{1}{k!\ell!} \binom{n+1}{\underbracket[0.5pt]{m, \cdots, m}_{k+\ell \hspace{.1cm} \text{$m$'s}}, n-(k+\ell)m+1} (n+1)^{n-(k+\ell)m-1}.
\end{align*}
Our conclusion follows.
\end{proof}

As is standard, we denote the probability distribution and expectation with respect to the uniform measure on the set of classical parking functions of length $n$ by $\PR_n$ and $\ER_n$, respectively. The following theorem provides an exact result on the expected number of $m$-cycles in a classical parking function drawn uniformly at random.

\begin{theorem}\label{classical-average}
Take $1\leq m\leq n$. Let $\pi \in \PF_n$ be a parking function chosen uniformly at random and $C_m(\pi)$ be the number of $m$-cycles of $\pi$. The expected number of $m$-cycles is given by
\begin{equation*}
\ER_n(C_m(\pi))=(m-1)!\binom{n+1}{m}/(n+1)^{m}.
\end{equation*}
\end{theorem}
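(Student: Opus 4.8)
The plan is to use linearity of expectation, which turns the expected cycle count into an incidence count that the circle argument of the preceding proofs evaluates directly. Writing $|\PF_n| = (n+1)^{n-1}$, we have
\[
\ER_n(C_m(\pi)) = \frac{1}{(n+1)^{n-1}} \sum_{\pi \in \PF_n} C_m(\pi),
\]
and $\sum_{\pi} C_m(\pi)$ is exactly the number of incidences $(\pi, c)$, where $c$ ranges over all potential $m$-cycles (an $m$-subset $S \subseteq [n]$ together with a cyclic ordering) and $c$ is a genuine $m$-cycle of $\pi$. In the notation of Theorem \ref{classical-cycle}, this incidence count is $\sum_{c} A_{(i_1,\dots,i_m)}$, where the sum is over all such $c$.

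The key structural step is to collapse this sum to a single prefix count. For $c = (i_1,\dots,i_m)$, the constraints $\pi_{i_1}=i_2,\dots,\pi_{i_m}=i_1$ assign the values $\{i_1,\dots,i_m\}$ bijectively onto the positions $\{i_1,\dots,i_m\}$. Invoking the symmetry of parking coordinates exactly as in the proof of Theorem \ref{classical-fixed} (any rearrangement of a parking function is a parking function), I would argue that $A_{(i_1,\dots,i_m)}$ depends only on the underlying set $S=\{i_1,\dots,i_m\}$ and in fact equals $a_S := \left|\{\pi \in \PF_n : \pi_1<\cdots<\pi_m,\ \{\pi_1,\dots,\pi_m\}=S\}\right|$. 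Since each $m$-set $S$ underlies precisely $(m-1)!$ distinct cyclic orderings, summing yields $\sum_c A_{(i_1,\dots,i_m)} = (m-1)!\sum_{|S|=m} a_S = (m-1)!\,\left|\{\pi \in \PF_n : \pi_1<\cdots<\pi_m\}\right|$, the final equality holding because each parking function with strictly increasing first $m$ coordinates determines a unique such $S$.

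It then remains to evaluate $\left|\{\pi \in \PF_n : \pi_1<\cdots<\pi_m\}\right|$, which is precisely the circle-argument computation already embedded in the proof of Theorem \ref{classical-fixed}: select the $m$ increasing preference values among the $n+1$ circular spots in $\binom{n+1}{m}$ ways, choose the remaining $n-m$ preferences freely in $(n+1)^{n-m}$ ways, and retain the unique valid rotation, giving $\binom{n+1}{m}(n+1)^{n-m-1}$. Combining, $\sum_{\pi} C_m(\pi) = (m-1)!\binom{n+1}{m}(n+1)^{n-m-1}$, and dividing by $(n+1)^{n-1}$ produces $(m-1)!\binom{n+1}{m}/(n+1)^{m}$, as claimed. (Equivalently, this is the $k+\ell=1$ pre-inclusion-exclusion term underlying Theorem \ref{classical-cycle}.)

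The step I expect to require the most care is the coordinate-symmetry reduction, namely pinning down that $A_{(i_1,\dots,i_m)} = a_S$ \emph{exactly}, with the cyclic-ordering data contributing precisely the factor $(m-1)!$ and no stray constant. I would secure this by an averaging identity: summing the count over all $m!$ bijections from $\{1,\dots,m\}$ onto $S$ gives both $m!\,A_{(i_1,\dots,i_m)}$ (by coordinate symmetry, every bijective assignment has the same count) and $m!\,a_S$ (each parking function whose first $m$ coordinates equal $S$ is counted once, in its unique increasing representative), forcing equality. Everything else is a faithful transcription of the circle argument already validated in Theorems \ref{classical-fixed} and \ref{classical-cycle}, so this bookkeeping is the only genuinely new ingredient.
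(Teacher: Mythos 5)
Your proposal is correct and follows essentially the same route as the paper: linearity of expectation, reduction via symmetry of parking coordinates to the count of parking functions with strictly increasing first $m$ coordinates (picking up the factor $(m-1)!$ for cyclic orderings), and then Pollak's circle argument to evaluate that count as $\binom{n+1}{m}(n+1)^{n-m-1}$. Your averaging justification of $A_{(i_1,\dots,i_m)}=a_S$ is just a more explicit rendering of the paper's one-line appeal to coordinate symmetry.
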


\begin{remark}\label{classical-remark}
From Theorem \ref{classical-average}, we may readily derive that for fixed $m$, as $n$ gets large, the expected number of $m$-cycles in a random parking function is asymptotically $1/m$. This asymptotic result was obtained earlier by Paguyo \cite{Paguyo} using other techniques.
\end{remark}

\begin{proof}
For distinct $1\leq i_1, \dots, i_m\leq n$, let $$A_{(i_1, \dots, i_m)}=\vert\{\pi \in \PF_n: \pi_{i_1}=i_2, \pi_{i_2}=i_3, \dots, \pi_{i_{m-1}}=i_m, \pi_{i_m}=i_1\} \vert.$$
By linearity of expectation and symmetry of parking coordinates,
\begin{align*}
\ER_n(C_m(\pi))&=\sum_{1\leq i_1<\cdots<i_m\leq n} (m-1)! \, \PR_n(A_{(i_1, \dots, i_m)}) \\
&=\sum_{1\leq i_1<\cdots<i_m\leq n} (m-1)! \, \PR_n(\pi_{1}=i_1, \dots, \pi_{m}=i_m)\\
&=\frac{(m-1)!}{(n+1)^{n-1}} \, \left\vert \left\{\pi \in \PF_n: \pi_1<\pi_2<\cdots<\pi_m \right\}\right\vert,
\end{align*}
where the additional scalar factor $(m-1)!$ in the first equation accounts for the number of ways of arranging an $m$-cycle with distinct entries $i_1, \dots, i_m$.

For ease of notation, let $$A_m=\left\{\pi \in \PF_n: \pi_1<\pi_2<\cdots<\pi_m \right\}.$$
Further let $$A'_m=\left\{\pi \in \PF_n: \pi_1, \pi_2, \dots, \pi_m \text{ are pairwise distinct} \right\}$$ and $$B'_m=\left\{f: [n] \rightarrow [n+1]: f(1), f(2), \dots, f(m) \text{ are pairwise distinct}\right\}.$$
It is clear that $\vert A'_m\vert=m!\vert A_m\vert$. To obtain a simplified expression for $A_m$, we use an extension of Pollak's circle argument as in the proof of Theorem \ref{classical-fixed}. Add an additional parking spot $n+1$, and arrange the spots in a circle. Allow $n+1$ also as a preferred spot. We first select $m$ spots for the first $m$ cars, which can be done in $\binom{n+1}{m}$ ways. Then for the remaining $n-m$ cars, there are $(n+1)^{n-m}$ possible preference sequences. Hence
\begin{equation*}
\vert B'_m\vert=m! \binom{n+1}{m}(n+1)^{n-m}.
\end{equation*}
Out of the $n+1$ rotations for any preference sequence, only one rotation becomes a valid parking function. Standard circular symmetry argument yields
\begin{equation*}
\vert A'_m\vert=\frac{1}{n+1} \vert B'_m\vert=m! \binom{n+1}{m}(n+1)^{n-m-1},
\end{equation*}
and so \begin{equation*}
\vert A_m \vert = \binom{n+1}{m}(n+1)^{n-m-1}.
\end{equation*}
Our conclusion follows.
\end{proof}

\begin{corollary}\label{classical-cor}
Let $\pi \in \PF_n$ be a parking function chosen uniformly at random. The expected number of cyclic points is given by
\begin{equation*}
\sum_{m=1}^n m!\binom{n+1}{m}/(n+1)^m.
\end{equation*}
\end{corollary}

\begin{proof}
As stated in the introduction, cyclic points are points that lie on some $m$-cycle of $\pi$ where $m \geq 1$. We apply Theorem \ref{classical-average} and note that every $m$-cycle contains $m$ cyclic points.
\end{proof}

\section{Prime parking functions}
\label{sec:prime}
A classical parking function
$\pi=(\pi_1, \dots, \pi_n)$ is said to be prime if for all $1 \leq j
\leq n-1$, at least $j+1$ cars want to park in the first $j$
places. (Equivalently, if we remove some term of $\pi$ equal to 1,
then we still have a parking function.) Denote the set of prime
parking functions of length $n$ by $\PPF_n$. Note that for prime parking functions it is impossible to have $n$ fixed points or to have an $n$-cycle.

As with classical
parking functions, we can also study prime parking functions via
circular rotation. The modified circular symmetry argument was first due to Kalikow \cite[pp.~141-142]{Stanley}, who provided the following
observation: Let $G$ denote the group of all $n$-tuples
$(a_1,\dots,a_n)\in [n-1]^n$ with componentwise addition modulo
$n-1$. Let $H$ be the subgroup generated by $(1,1,\dots,1)$. Then
every coset of $H$ contains exactly one prime parking function. The results we present in this section are thus largely parallel to the results for classical parking functions in our previous section.

\begin{theorem}\label{prime-fixed}
Let $0\leq k\leq n-1$. The number of prime parking functions of length $n$ with $k$ fixed points is given by
\begin{equation*}
\binom{n-1}{k}(n-2)^{n-k-1}.
\end{equation*}
\end{theorem}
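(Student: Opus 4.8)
The plan is to follow the template of Theorem~\ref{classical-fixed} almost verbatim, replacing Pollak's circle of $n+1$ spots by Kalikow's circle of $n-1$ spots. First I would set $A_i=\{\pi\in\PPF_n:\pi_i=i\}$ for $1\le i\le n$, and record at the outset that a prime parking function takes values in $\{1,\dots,n-1\}$, so $A_n=\emptyset$; in particular position $n$ is never fixed, which is what confines $k$ to the range $0\le k\le n-1$. By De~Morgan's law and inclusion--exclusion exactly as in Theorem~\ref{classical-fixed}, the number of prime parking functions with exactly $k$ fixed points equals
\[
\sum_{\ell=0}^{n-k}(-1)^\ell \sum_{\substack{1\le i_1<\cdots<i_k\le n\\ 1\le s_1<\cdots<s_\ell\le n\\ s_t\neq i_j\ \forall t,j}} \Bigl| \bigcap_{j=1}^k A_{i_j}\cap\bigcap_{t=1}^\ell A_{s_t}\Bigr|.
\]
Since the primality condition $\#\{i:\pi_i\le j\}\ge j+1$ depends only on the multiset of values, $\PPF_n$ is stable under permuting coordinates, so each inner cardinality equals the number of prime parking functions with $\pi_1<\cdots<\pi_{k+\ell}$ and $\{\pi_1,\dots,\pi_{k+\ell}\}=\{i_1,\dots,i_k,s_1,\dots,s_\ell\}$, precisely as in the classical proof.

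Second, I would evaluate the resulting ``increasing-prefix'' count by the prime analogue of the circular computation in Theorem~\ref{classical-average}. Using the quoted fact that every coset of $H=\langle(1,\dots,1)\rangle$ in $(\mathbb{Z}/(n-1))^n$ contains exactly one prime parking function, and that these cosets have size $n-1$, the property ``$\pi_1,\dots,\pi_m$ are pairwise distinct'' is invariant under adding $(1,\dots,1)$, so the number of prime parking functions with $\pi_1<\cdots<\pi_m$ is $\binom{n-1}{m}(n-1)^{n-m-1}$: select the $m$ preferred values as an unordered set in $\binom{n-1}{m}$ ways, choose the remaining $n-m$ preferences freely among the $n-1$ circle spots, and retain the unique rotation in each orbit that is prime. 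Assembling the labelled configurations with $m=k+\ell$ then produces, for each $\ell$, the term $\binom{n-1}{k,\ell,n-1-k-\ell}(n-1)^{n-k-\ell-1}$, and the vanishing of this multinomial for $k+\ell>n-1$ automatically truncates the alternating sum (reflecting $A_n=\emptyset$).

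Finally I would collapse the alternating sum. Factoring $\binom{n-1}{k,\ell,n-1-k-\ell}=\binom{n-1}{k}\binom{n-1-k}{\ell}$ pulls $\binom{n-1}{k}$ out front, and the remaining sum
\[
\sum_{\ell=0}^{n-1-k}(-1)^\ell\binom{n-1-k}{\ell}(n-1)^{(n-1-k)-\ell}=\bigl((n-1)-1\bigr)^{n-1-k}=(n-2)^{n-k-1}
\]
is just the binomial theorem, yielding the claimed $\binom{n-1}{k}(n-2)^{n-k-1}$.

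I expect the main obstacle to be the circular step: unlike Pollak's construction there are more cars ($n$) than circle spots ($n-1$), so one cannot simply invoke ``every car parks.'' The cleanest route is to avoid re-deriving Kalikow's parking rule and instead use only the quoted coset statement as a black box, combined with the observation that distinctness of the first $m$ coordinates is a rotation-invariant event; the passage from ``distinct'' to ``strictly increasing'' costs a factor $1/m!$ by coordinate symmetry, which is exactly what is absorbed by selecting the $m$ values as an unordered set. Checking that these factors match the multinomial bookkeeping, and that the $\ell$-range and the degenerate case $k=n-1$ behave correctly (with the convention $(n-2)^0=1$), is the only delicate point; everything else runs parallel to Theorems~\ref{classical-fixed} and~\ref{classical-average}.
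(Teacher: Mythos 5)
Your proposal is correct and follows essentially the same route as the paper: inclusion--exclusion over fixed positions exactly as in Theorem~\ref{classical-fixed}, with Pollak's circle of $n+1$ spots replaced by Kalikow's circle of $n-1$ spots, leading to the alternating sum $\sum_{\ell}(-1)^{\ell}\binom{n-1}{k,\ell,n-k-\ell-1}(n-1)^{n-k-\ell-1}$ that collapses by the binomial theorem to $\binom{n-1}{k}(n-2)^{n-k-1}$. You in fact supply more detail than the paper does (the observation that $A_n=\emptyset$, the rotation-invariance of distinctness, and the automatic truncation of the $\ell$-range), all of which checks out.
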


\begin{proof}
We proceed as in the proof of Theorem \ref{classical-fixed}. The only difference is that the circular symmetry argument is now for a circle with $n-1$ spots. So we have
\begin{align*}
&\sum_{\ell=0}^{n-k-1} (-1)^{\ell} \binom{n-1}{k, \ell, n-k-\ell-1} (n-1)^{n-k-\ell-1} \\
=& \binom{n-1}{k} \sum_{\ell=0}^{n-k-1} (-1)^\ell \binom{n-k-1}{\ell} (n-1)^{n-k-1-\ell} \\
=&\binom{n-1}{k}(n-2)^{n-k-1}. \hspace{2cm} \qedhere
\end{align*}
\end{proof}

\begin{theorem}\label{prime-gen}
The generating function for prime parking functions of length $n$ with respect to the number of fixed points is given by $(q+n-2)^{n-1}$.
\end{theorem}

\begin{proof}
Utilizing Theorem \ref{prime-fixed}, we perform generating function calculations.
\begin{equation*}
\sum_{k=0}^{n-1} q^k \binom{n-1}{k}(n-2)^{n-k-1}=(q+n-2)^{n-1}.
\end{equation*}
\end{proof}

As in the case of classical parking functions, a more general theorem exists for prime parking functions when we extend the notion of fixed points to cycles.

\begin{theorem}\label{prime-cycle}
Let $m\geq 1$ and $k\geq 0$ with $km \leq n-1$. The number of prime parking functions of length $n$ with $k$ $m$-cycles is given by
\begin{equation*}
\sum_{\ell: (k+\ell)m \leq n-1} (-1)^{\ell} \frac{\left((m-1)!\right)^{k+\ell}}{k!\ell!} \binom{n-1}{\underbracket[0.5pt]{m, \cdots, m}_{k+\ell \hspace{.1cm} \text{$m$'s}}, n-(k+\ell)m-1} (n-1)^{n-(k+\ell)m-1}.
\end{equation*}
\end{theorem}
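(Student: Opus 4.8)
The plan is to follow the proof of Theorem \ref{classical-cycle} essentially verbatim, replacing the classical Pollak argument (a circle with $n+1$ spots) by Kalikow's modified circular symmetry argument for prime parking functions (a circle with $n-1$ spots), in exactly the way that Theorem \ref{prime-fixed} adapts Theorem \ref{classical-fixed}. Concretely, for distinct $1\leq i_1,\dots,i_m\leq n$ I would set
$$A_{(i_1,\dots,i_m)}=\{\pi\in\PPF_n:\pi_{i_1}=i_2,\ \dots,\ \pi_{i_{m-1}}=i_m,\ \pi_{i_m}=i_1\},$$
and note that distinct $m$-cycles realized by a single $\pi$ are necessarily disjoint, so the events indexed by $m$-cycles sharing a coordinate are mutually exclusive. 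Applying De Morgan's law and inclusion--exclusion to count the prime parking functions lying in exactly $k$ of these events, I would express the answer as an alternating sum, with overall sign $(-1)^\ell$, over the number $\ell$ of additional disjoint $m$-cycles introduced for inclusion--exclusion.

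Next I would invoke symmetry of parking coordinates, as in Theorems \ref{classical-fixed} and \ref{classical-cycle}, to reduce the intersection $\bigcap A_{(\cdots)}$ over any $k+\ell$ pairwise disjoint $m$-cycles to a representative configuration in which the first $(k+\ell)m$ cars realize the cycles, so that only the combinatorial type of the placement survives. The resulting sum over all such placements is then evaluated by Kalikow's circular argument on $n-1$ spots, exactly as the corresponding double sum in Theorem \ref{classical-fixed} was evaluated by Pollak's argument on $n+1$ spots: partition the $n-1$ circle spots into $k+\ell$ blocks of size $m$ and one remainder block of size $n-(k+\ell)m-1$, giving $\binom{n-1}{m,\dots,m,\,n-(k+\ell)m-1}$; order each $m$-block into a directed cycle, giving $\big((m-1)!\big)^{k+\ell}$; let the remaining $n-(k+\ell)m$ cars choose preferences freely from $[n-1]$; and retain the unique prime parking function in each rotation orbit of size $n-1$, which produces the power $(n-1)^{n-(k+\ell)m-1}$. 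Dividing by $k!\,\ell!$ to account for the orderings among the $k$ forced cycles and among the $\ell$ inclusion--exclusion cycles, and then summing $(-1)^\ell$ over the admissible range $(k+\ell)m\leq n-1$, yields the claimed expression.

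The main obstacle is the bookkeeping of the combinatorial prefactor $\tfrac{((m-1)!)^{k+\ell}}{k!\,\ell!}$: one must verify that the ordered multinomial partition overcounts each unordered pair (a set of $k$ forced cycles together with a set of $\ell$ inclusion--exclusion cycles) by exactly $k!\,\ell!$, and that arranging $m$ labeled spots into a directed $m$-cycle genuinely admits $(m-1)!$ realizations, so that these two factors combine with the symmetry reduction without double-counting it. A secondary point requiring care is the validity of the orbit count under the imposed constraints: since Kalikow's observation guarantees that every coset of the subgroup generated by $(1,\dots,1)$ in $[n-1]^n$ contains exactly one prime parking function, each rotation orbit of preference sequences still contributes exactly one prime parking function regardless of the cycle pattern we prescribe, which justifies the division by $n-1$. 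As a consistency check I would confirm that specializing to $m=1$ recovers Theorem \ref{prime-fixed}: there $(m-1)!=1$, and $\tfrac{1}{k!\,\ell!}\binom{n-1}{1,\dots,1,\,n-k-\ell-1}=\binom{n-1}{k,\ell,n-k-\ell-1}$, so the summand collapses to exactly the expression appearing in the proof of that theorem.
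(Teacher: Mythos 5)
Your proposal is correct and follows essentially the same route as the paper: the authors likewise prove Theorem \ref{prime-cycle} by repeating the inclusion--exclusion and Pollak-style argument of Theorem \ref{classical-cycle}, with the only change being Kalikow's circular symmetry on $n-1$ spots in place of the classical circle with $n+1$ spots. Your accounting of the prefactor $((m-1)!)^{k+\ell}/(k!\,\ell!)$ and the $m=1$ consistency check are more explicit than the paper's (deliberately terse) writeup, but the underlying argument is identical.
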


\begin{proof}
We proceed as in the proof of Theorem \ref{classical-cycle} and note that the circular symmetry argument is now for a circle with $n-1$ spots.
\end{proof}

As is standard, we denote the probability distribution and expectation with respect to the uniform measure on the set of prime parking functions of length $n$ by $\PR_n$ and $\ER_n$, respectively. The following theorem provides an exact result on the expected number of $m$-cycles in a prime parking function drawn uniformly at random.

\begin{theorem}\label{prime-average}
Take $1\leq m\leq n-1$. Let $\pi \in \PPF_n$ be a prime parking function chosen uniformly at random and $C_m(\pi)$ be the number of $m$-cycles of $\pi$. The expected number of $m$-cycles is given by
\begin{equation*}
\ER_n(C_m(\pi))=(m-1)!\binom{n-1}{m}/(n-1)^{m}.
\end{equation*}
\end{theorem}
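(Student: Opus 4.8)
The plan is to follow the proof of Theorem~\ref{classical-average} almost verbatim, replacing Pollak's circle of $n+1$ spots by Kalikow's circle of $n-1$ spots and the count $|\PF_n|=(n+1)^{n-1}$ by $|\PPF_n|=(n-1)^{n-1}$ (the latter being immediate from Kalikow's coset observation). For distinct $1\le i_1,\dots,i_m\le n$ I set $A_{(i_1,\dots,i_m)}=\{\pi\in\PPF_n:\pi_{i_1}=i_2,\dots,\pi_{i_m}=i_1\}$. The prime condition $|\{i:\pi_i\le j\}|\ge j+1$ for $1\le j\le n-1$ depends only on the multiset of values of $\pi$, so $\PPF_n$ is closed under permuting coordinates; this is the ``symmetry of parking coordinates'' the argument needs, and (taking $j=n-1$) it also shows that every $\pi\in\PPF_n$ has entries in $[n-1]$, which is why the relevant circle has $n-1$ spots. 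Exactly as in the classical case, linearity of expectation together with the fact that a support $\{i_1<\cdots<i_m\}$ carries $(m-1)!$ equiprobable $m$-cycles gives
\begin{align*}
\ER_n(C_m(\pi))
&=\sum_{1\le i_1<\cdots<i_m\le n}(m-1)!\,\PR_n(A_{(i_1,\dots,i_m)})\\
&=\frac{(m-1)!}{(n-1)^{n-1}}\,\bigl|\{\pi\in\PPF_n:\pi_1<\pi_2<\cdots<\pi_m\}\bigr|.
\end{align*}

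It then remains to show $\bigl|\{\pi\in\PPF_n:\pi_1<\cdots<\pi_m\}\bigr|=\binom{n-1}{m}(n-1)^{n-m-1}$, which on substitution yields $(m-1)!\binom{n-1}{m}/(n-1)^m$, as claimed. I would obtain this count by the modified circle argument on $n-1$ spots. Consider circular preference sequences in $[n-1]^n$ whose first $m$ coordinates are pairwise distinct; there are $\frac{(n-1)!}{(n-1-m)!}(n-1)^{n-m}$ of them, obtained by choosing an ordered $m$-tuple of distinct spots for the first $m$ cars (legitimate since $m\le n-1$) and letting the remaining $n-m$ cars range freely. The property ``the first $m$ coordinates are distinct'' is preserved by the rotation $(a_1,\dots,a_n)\mapsto(a_1+1,\dots,a_n+1)\bmod(n-1)$, hence is constant on each coset of $H=\langle(1,\dots,1)\rangle$; since every such coset contains exactly one prime parking function, dividing by $n-1$ yields $\frac{(n-1)!}{(n-1-m)!}(n-1)^{n-m-1}=m!\binom{n-1}{m}(n-1)^{n-m-1}$ prime parking functions with first $m$ coordinates distinct. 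Dividing once more by $m!$, using closure of $\PPF_n$ under permutation to pass to the increasing arrangement, produces the desired count.

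The one step demanding genuine care is this circle argument: one must verify that ``first $m$ coordinates distinct'' really is rotation-invariant, so that Kalikow's one-prime-function-per-coset principle legitimately applies to the restricted set, and then keep the two divisions straight — by $n-1$ for the circular symmetry and by $m!$ for the ordering. Everything else is a mechanical transcription of the proof of Theorem~\ref{classical-average} with $n+1$ systematically replaced by $n-1$, so I expect no further difficulty.
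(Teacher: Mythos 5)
Your proposal is correct and follows essentially the same route as the paper: linearity of expectation plus permutation symmetry reduces the problem to counting prime parking functions with $\pi_1<\cdots<\pi_m$, and the Kalikow circle argument on $n-1$ spots gives $\binom{n-1}{m}(n-1)^{n-m-1}$ for that count. You simply supply more of the details (permutation-invariance of the prime condition, rotation-invariance of the distinctness property, the two divisions by $n-1$ and $m!$) than the paper, which delegates them to the proof of Theorem~\ref{classical-average}.
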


\begin{remark}
From Theorem \ref{prime-average}, we may readily derive that for fixed $m$, as $n$ gets large, the expected number of $m$-cycles in a random prime parking function is asymptotically $1/m$.
\end{remark}

\begin{proof}
We proceed as in the proof of Theorem \ref{classical-average}. The only difference is that the circular symmetry argument is now for a circle with $n-1$ spots. So we have
$$\ER_n(C_m(\pi))=\frac{(m-1)!}{(n-1)^{n-1}} \binom{n-1}{m} (n-1)^{n-m-1}=(m-1)! \binom{n-1}{m} / (n-1)^{m}.$$
\end{proof}

\begin{corollary}
Let $\pi \in \PPF_n$ be a prime parking function chosen uniformly at random. The expected number of cyclic points is given by
\begin{equation*}
\sum_{m=1}^{n-1} m!\binom{n-1}{m}/(n-1)^m.
\end{equation*}
\end{corollary}

\begin{proof}
As in the proof of Corollary \ref{classical-cor}, we apply Theorem \ref{prime-average} and note that every $m$-cycle contains $m$ cyclic points.
\end{proof}

\section{Further discussions}
\label{sec:dis}
\subsection{Extensions of our techniques}
\label{sec:gen}
Fix some positive integers $k$ and $r$. Define an $(r, k)$-parking function of length $n$ \cite{SW} to be a sequence $(\pi_1, \dots, \pi_n)$ of positive integers such that if $\lambda_1\leq \cdots \leq \lambda_n$ is the (weakly) increasing
rearrangement of $\pi_1,\dots,\pi_n$, then $\lambda_i \leq k+(i-1)r$ for $1\leq i\leq n$. 
Denote the set of $(r, k)$-parking functions of length $n$ by
$\PF_n(r, k)$. There is a similar interpretation
for such $(r, k)$-parking functions in terms of the classical parking function
scenario: One wishes to park $n$ cars on a street with $k+(n-1)r$ spots,
but only $n$ spots are still empty, which are at positions no later
than $k, k+r, \dots, k+(n-1)r$. 

Pollak's original circle argument \cite{Pollak} may be further extended to $(r, k)$-parking functions.
Let $G$ denote the group of all $n$-tuples
$(a_1,\dots,a_n)\in [k+nr]^n$ with componentwise addition modulo
$k+nr$. Let $H$ be the subgroup generated by $(1,1,\dots,1)$. Then
every coset of $H$ contains exactly $k$ $(r, k)$-parking functions. See \cite{Stanley1} for more details.

The following proposition is then immediate.

\begin{proposition}\label{decomposition}
Let $0\leq s \leq n$. We have
\begin{equation*}
\vert \{\pi \in \PF_n(r, k): \pi_1<\pi_2<\dots<\pi_s\} \vert =k\binom{k+nr}{s}(k+nr)^{n-s-1}.
\end{equation*}
\end{proposition}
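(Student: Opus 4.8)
The plan is to run the extended Pollak circle argument for $(r,k)$-parking functions in exactly the form used for Theorem~\ref{classical-average}, replacing the single valid rotation per coset by the stated count of $k$ valid $(r,k)$-parking functions per coset of $H$. Working inside the group $G$ of all $n$-tuples in $[k+nr]^n$ under componentwise addition modulo $k+nr$, I would first record that every $(r,k)$-parking function lies in $G$, since its increasing rearrangement satisfies $\lambda_i \le k+(i-1)r \le k+(n-1)r < k+nr$, using $r\ge 1$. The subgroup $H=\langle(1,\dots,1)\rangle$ has order exactly $k+nr$ (the constant tuple has full additive order), so the cosets of $H$ partition $G$ into $(k+nr)^{n-1}$ blocks of size $k+nr$, each of which contains precisely $k$ $(r,k)$-parking functions.

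First I would count $(r,k)$-parking functions whose first $s$ coordinates are merely \emph{distinct}. Let $D\subseteq G$ be the set of $n$-tuples whose first $s$ entries are pairwise distinct. The crucial observation is that adding the constant tuple $(1,\dots,1)$ preserves distinctness of any fixed block of coordinates, so $D$ is a union of complete cosets of $H$. Counting directly, there are $(k+nr)(k+nr-1)\cdots(k+nr-s+1)=s!\binom{k+nr}{s}$ choices for the distinct first $s$ entries and $(k+nr)^{n-s}$ choices for the remaining entries, giving $|D|=s!\binom{k+nr}{s}(k+nr)^{n-s}$ and hence $|D|/(k+nr)=s!\binom{k+nr}{s}(k+nr)^{n-s-1}$ cosets inside $D$. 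Applying the $k$-per-coset count then yields
\begin{equation*}
\bigl|\{\pi \in \PF_n(r,k): \pi_1,\dots,\pi_s \text{ distinct}\}\bigr| = k\,s!\binom{k+nr}{s}(k+nr)^{n-s-1}.
\end{equation*}

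To pass from distinct to strictly increasing first coordinates, I would invoke the symmetry of $(r,k)$-parking coordinates: membership in $\PF_n(r,k)$ depends only on the multiset of values, so permuting the first $s$ coordinates of such a function again produces an $(r,k)$-parking function. Each $(r,k)$-parking function with distinct first $s$ entries therefore arises from exactly one with strictly increasing first $s$ entries under the $s!$ rearrangements of those coordinates, and dividing the previous count by $s!$ gives the claimed $k\binom{k+nr}{s}(k+nr)^{n-s-1}$. A sanity check at $s\in\{0,1\}$ recovers the total count $k(k+nr)^{n-1}$ of $(r,k)$-parking functions, since the ordering condition is then vacuous.

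Because the heavy lifting is done by the stated coset fact, I do not expect a genuine obstacle; the one point requiring care is the bookkeeping that links the three layers, namely that $D$ splits into whole cosets (shift-invariance of distinctness), that the $k$-per-coset statement applies uniformly to precisely these cosets, and that the symmetry reduction is a clean $s!$-to-$1$ collapse rather than a source of over- or undercounting. Confirming the degenerate cases $s=0,1$, where ``distinct'' and ``increasing'' impose no real constraint, is the only place where one must verify that the formula still reads correctly.
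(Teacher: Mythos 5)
Your proof is correct and follows essentially the same route as the paper: both rest on the fact that each coset of $H=\langle(1,\dots,1)\rangle$ in $[k+nr]^n$ contains exactly $k$ $(r,k)$-parking functions, combined with the observation that the constraint on the first $s$ coordinates is rotation-invariant. The paper merely absorbs your $s!$-to-$1$ symmetry collapse by choosing the $s$ preferred spots as a set ($\binom{k+nr}{s}$ ways) rather than as an ordered distinct tuple, so the two write-ups differ only in bookkeeping.
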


\begin{proof}
We apply the generalized circle argument as described above. Arrange $k+nr$ spots in a circle. We first select $s$ spots for the first $s$ cars, which can be done in $\binom{k+nr}{s}$ ways. Then for the remaining $n-s$ cars, there are $(k+nr)^{n-s}$ possible preference sequences. Out of the $k+nr$ rotations for any preference sequence, exactly $k$ rotations become valid $(r, k)$-parking functions. The rest is a standard circular symmetry argument as in the proof of Theorems \ref{classical-fixed} and \ref{classical-average}.
\end{proof}

\subsection{Connections to other research areas}
\label{sec:connect}
By setting $k=0, \dots, n$ in Theorem \ref{classical-fixed}, a plethora of findings emerge, particularly through OEIS entries.

\begin{itemize}
    \item For $k=0$, our sequence is included as \cite[St001903]{FindStat} with explanation and coincides with \cite[A081215]{OEIS}.
    \item For $k=1$, our sequence coincides with \cite[A081216]{OEIS} and is related to the dimension of the primitive middle cohomology of Dwork hypersurfaces.
    \item For $k=n-2$, our sequence coincides with \cite[A006325]{OEIS} which is the $4$-dimensional analog of centered polygonal numbers.
    \item For $k=n-1$, our sequence coincides with \cite[A000217]{OEIS} which counts many interesting combinatorial objects, including the number of edges in a complete graph of order $n+1$ as well as the number of legal ways to insert a pair of parentheses in a string of $n$ letters.
    \item For $k=n$, our sequence is the constant sequence $\left\{1, 1, \dots, \right\}$, which is straightforward as there is only one parking function with all points fixed, namely $\pi=(1, 2, \dots, n)$.
\end{itemize}

We have not found a match on OEIS for other more general $k$ values, but the above list already presents many intriguing research directions, which we hope to explore in future work.

Likewise, setting $k=0, \dots, n-1$ in Theorem \ref{prime-fixed} reveals mysterious coincidences that motivate further investigations.

\begin{itemize}
    \item For $k=0$, our sequence coincides with \cite[A007778]{OEIS} which is the number of ways of writing an $n$-cycle as the product of $n+1$ transpositions.
    \item For $k=1$, our sequence coincides with \cite[A055897]{OEIS} which is the total number of leaves in all labeled rooted trees with $n$ nodes.
    \item For $k=2$, our sequence coincides with \cite[A081132]{OEIS} which is the sum of all the fixed points in the set of endofunctions on $\{1, 2, \dots, n+1\}$.
    \item For $k=n-3$, our sequence coincides with \cite[A019582]{OEIS} which counts many interesting combinatorial objects, including half the number of colorings of $4$ points on a line with $n$ colors as well as the number of ways to place two dominoes horizontally in different rows on an $n \times n$ chessboard.
    \item For $k=n-2$, our sequence coincides with \cite[A002378]{OEIS} which are commonly referred to as the oblong numbers.
    \item For $k=n-1$, our sequence is the constant sequence $\left\{1, 1, \dots, \right\}$, which is straightforward as there is only one prime parking function with all points but one fixed, namely $\pi=(1, 2, \dots, n-1, 1)$.
\end{itemize}

\section*{Acknowledgements}
We thank Richard Stanley for the helpful discussions. Some of the results in this paper were independently conjectured by Ari Cruz. The authors are grateful for many valuable comments from the referees, which significantly improved the quality of the paper.
%-------------------------------------------------------------------

\end{document}